\documentclass[11pt]{article}
\usepackage{amsmath,amssymb,amsfonts,amsthm,enumerate,xcolor}
\usepackage[normalem]{ulem} %

\usepackage[top=1in, bottom=1in, left=1.25in, right=1.25in, marginparwidth=23mm, marginparsep=2mm]{geometry}

\usepackage[unicode,breaklinks=true,colorlinks=true]{hyperref}

\newtheorem{thm}{Theorem}[section]

\newtheorem{lem}[thm]{Lemma}

\theoremstyle{definition}

\numberwithin{equation}{section}
\newcommand{\bke}[1]{\left ( #1 \right )}

\newcommand{\norm}[1]{\left \| #1 \right \|}

\newcommand{\R}{\mathbb{R}}
\newcommand{\N}{\mathbb{N}}
\renewcommand{\div}{\mathop{\rm div}\nolimits}

\newcommand{\pd}{\partial}

\newcommand\Om{\Omega}
\newcommand{\si}{\sigma}
\newcommand\De{\Delta}

\newcommand{\nb}{\nabla}
\newcommand{\lec}{{\ \lesssim \ }}

\newcommand{\bka}[1]{{\langle #1 \rangle}}

\newcommand\al{\alpha}

\newcommand\e{\epsilon}

\renewcommand\th{\theta}

\newcommand\la{\lambda}

\newcommand\Th{\Theta}

\newcommand{\NN}{\mathbb{N}}

\newcommand{\cR}{\mathcal{R}}

\newcommand{\EQ}[1]{\begin{equation} #1 \end{equation}}
\newcommand{\EQS}[1]{\begin{equation}\begin{split} #1 \end{split}\end{equation}}

\newcommand{\EQN}[1]{\begin{equation*}\begin{split} #1 \end{split}\end{equation*}}

\newcommand{\loc}{{\mathrm{loc}}}
\newcommand{\tsum}{\textstyle \sum}
\newcommand{\all}{{\mathrm{all}}}

\begin{document}
\title{Large discretely self-similar solutions
to Oberbeck-Boussinesq system with Newtonian gravitational field}%
\author{Tai-Peng Tsai}

\date{}

\maketitle
\begin{abstract}
Discretely self-similar solutions to Oberbeck-Boussinesq system with Newtonian gravitational field for large discretely self-similar initial data are constructed in this note, extending the construction of Brandolese and Karch (arXiv:2311.01093) on self-similar solutions. It follows the approach of Bradshaw and Tsai (Ann.~Henri Poincar\'e 2017) and find an explicit a priori bound for the deviation from suitably revised profiles in similarity variables.

\medskip

\emph{keywords:} Oberbeck-Boussinesq system, Navier-Stokes equations, self-similar solutions, discretely self-similar solutions, similarity variable, revised profile.

\medskip

\emph{2020 Mathematics Subject Classifications}: 35Q30, 35Q35, 76D03
\end{abstract}

\section{Introduction}

The Oberbeck-Boussinesq system is a mathematical model that describes a Newtonian incompressible fluid with buoyancy force caused by the variation of fluid temperature from its equilibrium value. It reads
\EQS{\label{OB1}
\partial_t v-\Delta v +v\cdot\nabla v+\nabla p &= \th \nb G+f,\\
\nabla \cdot v&=0,\\
\partial_t \th-\Delta \th +v\cdot\nabla \th& = 0,
}
with the unknown fluid velocity $v(x,t)$, the temperature $\th(x,t)$ and the pressure $p = p(x,t)$, defined for $x \in \R^3$ and $ t>0$. Here $ G(x)$ stands for the gravitational potential, and we assume $G(x)=|x|^{-1}$ in this paper. Finally $f(x,t)$ stands for a given external force.
The system \eqref{OB1}
is coupled with initial conditions
\EQ{\label{OB2}
v(x,0)=v_0(x), \quad \th(x,0)=\th_0(x).
}

The system \eqref{OB1} enjoys the following scaling property: If $(v,\th,p)$ is a solution for \eqref{OB1} with force $f$, then 
\[
v^\la(x,t)= \la v(\la x, \la^2 t), \quad
\th^\la(x,t)= \la \th(\la x, \la^2 t), \quad
p^\la(x,t)= \la^2 p(\la x, \la^2 t), 
\]
is also a solution of the same system with force $f^\la(x,t)= \la^3 f(\la x, \la^2 t)$.  Assuming
\EQ{\label{self-similar}
v^\la = v, \quad \th^\la=\th, \quad p^\la = p, \quad f^\la=f
}
for each $\la>0$, we say $(v,\th)$ is a \emph{self-similar} solution. If \eqref{self-similar} is valid for one particular $\la>1$, we say $(v,\th)$ is \emph{discretely self-similar} with factor $\la$, or $\la$-DSS.

The study of large self-similar and discretely self-similar solutions were first focused on Navier-Stokes equations, and started from the break-through paper \cite{JiaSve} by Jia and Sverak. It was then extended to DSS solutions \cite{Tsai2013,BT1,BT2,LR16,ChaeWolf2,BT-APDE19}, 
to the half space setting \cite{Korobkov-Tsai,BT2,MR4444075}, and to 
lower regularity data \cite{BT1,BT3,MR3916974}.
In addition to Navier-Stokes equations, 
it has also been extended to fractional Navier-Stokes equations \cite{MR3975031}, 
MHD equations and fractional MHD \cite{MR3975833,MR3997556,MR4205085},
the viscoelastic Navier–Stokes equations with damping \cite{MR3975833}, and more recently the Oberbeck-Boussinesq system \cite{BK}.

For the Oberbeck-Boussinesq system \eqref{OB1}-\eqref{OB2},
large self-similar solutions have been constructed by Brandolese and Karch \cite{BK}, for $v_0,\th_0 \in L^\infty_\loc(\R^3 \setminus \{0\})$. The paper \cite{BK} uses the invading domain method and contradiction arguments, following the approach of Korobkov and Tsai \cite{Korobkov-Tsai}. Solutions are first constructed in finite domains $\Om$ using a priori bounds that may depend on $\Om$, then constructed in $\R^3$ as limits of solutions in $B_k$, $k \to \infty$, using an a priori bound that does not depend on $k$. The key steps are to prove these  
a priori bounds by contradiction arguments. As such, these bounds exist but their values are not explicit.

In this paper we will show that, in fact, for the associated system of \eqref{OB1} in similarity variables, we can find an \emph{explicit a priori bound} for the deviation from a suitably revised background profile, in the same way as in Bradshaw and Tsai \cite{BT1}, (see also \cite{BT2}). This explicit a priori bound not only allows us to construct self-similar solutions, but also allows us to construct discretely self-similar solutions for arbitrary DSS factor $\la>1$. This approach also allows us to relax the assumption on the initial data, and we only need to assume $u_0,\th_0$ are DSS and in $L^{3,\infty}(\R^3)$ (i.e., weak $L^3$). We denote by $L^{q,r}$ Lorentz spaces (\cite{AdamsFournier}).

The method in \cite{BT1} has been extended by Lai \cite{MR3975833} to the
MHD equations and the viscoelastic Navier–Stokes equations with damping. These are coupled systems of Navier–Stokes equations and the equations of other unknowns, and hence are similar in structure to the Oberbeck-Boussinesq system \eqref{OB1}. The a priori bounds in \cite{MR3975833} are slightly different from that in this paper: In  \cite{MR3975833}, the energy estimates of (deviations of) $v$ and other unknowns are suitably combined to cancel trilinear terms. In this paper, the energy estimates of (deviations of) $v$ and $\th$ do not contain trilinear terms, but they need to be suitably combined to absorb the term from $\th \nb G$ to the left side. We will propose a forced MHD system \eqref{MHDG} in Section \ref{sec4} which has an interesting feature that both trilinear terms and large quadratic terms appear in the energy estimates. It is unclear whether we can find an explicit a priori bound for the system, and whether we may prove an implicit a priori bound for it by a contradiction argument. 

We will construct solutions with the following property.
A solution triplet $(v,\th,p)$ of \eqref{OB1} is said to satisfy the \emph{local energy inequalities} if for all non-negative $\phi\in C_c^\infty (\R^3 \times (0,\infty))$, we have 
\EQS{\label{localEnergyIneq}
2\iint |\nabla v|^2\phi\,dx\,dt \leq& \iint |v|^2(\partial_t \phi + \Delta\phi )\,dx\,dt +\iint (|v|^2+2p)(v\cdot \nabla\phi)\,dx\,dt
\\
& + \iint (\th \nb G + f)\cdot v\phi\,dx\,dt,
\\
2\iint |\nabla \th|^2\phi\,dx\,dt \leq& \iint |\th|^2(\partial_t \phi + \Delta\phi )\,dx\,dt +\iint |\th|^2(v\cdot \nabla\phi)\,dx\,dt.
}
They are called ``local energy equalities'' if the inequality sign ``$\le$'' is replaced by equality ``$=$''.
The presence of the local energy inequalities enables one to do local energy estimates (essential for regularity theory), and is a property that is not known, e.g., for all Leray-Hopf weak solutions of the Navier-Stokes equations. 

Note that the two inequalities in \eqref{localEnergyIneq} hold separately. In contrast, for the
MHD equations and the viscoelastic Navier–Stokes equations with damping, studied in Lai \cite{MR3975833}, 
the nonlinear couplings are very strong and only a combined local energy inequality that uses the cancelation between nonlinear couplings is expected hold. See \cite[(1.19), (1.23)]{MR3975833}.
This is related to what we mentioned earlier on the a priori bounds in \cite{MR3975833}.
 
Let $e^{t\Delta}v_0(x)=\int_{\R^3} (4\pi t)^{-3/2}e^{-|x-z|^2/t}v_0(z)\,dz$; this is the solution to the homogeneous heat equation in $\R^3$. 
The main objective of this paper is to prove the following theorem.

\begin{thm}[DSS solution]\label{th:main}
Let $v_0,\th_0$ be a divergence free vector field and a function
in $\R^3$ which are $\lambda$-DSS for some $\lambda >1$ and satisfy 
\begin{equation}\label{ineq:decayingdata}
\|v_0\|_{L^{3,\infty}(\R^3)} + \|\th_0\|_{L^{3,\infty}(\R^3)} \leq c_0,
\end{equation} for a possibly large constant $c_0$. 
Assume $f$ is $\lambda$-DSS, i.e., $f(x,t)=\la^3 f(\la x, \la^2 t)$, 
with $f\in L^\infty(1,\la^2; (L^{6/5,2} + L^2)(\R^3))$.
Then, there exists a $\lambda$-DSS distributional solution $(v,\th,p)$ to \eqref{OB1} with $(v,\th)\in L^2_{\loc}(\R^3\times [0,\infty);\R^3 \times \R)$, $p\in L^{3/2}_{\loc}(\R^3\times [0,\infty))$, 
\EQ{
v(t)-e^{t\Delta}v_0 ,\, \th(t)-e^{t\Delta}\th_0 \in L^\infty(1,\la^2;L^2(\R^3)) \cap L^2(1,\la^2;H^1(\R^3)) ,
}
the local energy inequalities \eqref{localEnergyIneq},  and
\begin{equation}\label{th:main-est}
 \|  v(t)-e^{t\Delta}v_0 \|_{L^2(\R^3)} + \|  \th(t)-e^{t\Delta}\th_0 \|_{L^2(\R^3)}\leq C_0\,t^{1/4}
\end{equation}
for any $t\in (0,\infty)$ and a constant $C_0=C_0(v_0,\th_0)$.
\end{thm}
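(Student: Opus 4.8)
The plan is to follow the Bradshaw--Tsai approach: pass to similarity variables, set up a suitably revised background profile that absorbs the initial data and the force, derive an a priori bound for the deviation via energy estimates, and then solve the resulting profile equation on a period of the DSS scaling by a Galerkin-type (or Leray--Schauder) fixed-point argument, finally undoing the change of variables to return to a solution of \eqref{OB1}. Concretely, write $y = x/\sqrt{t}$, $s = \log t$, and set
\[
v(x,t) = \frac{1}{\sqrt t}\, V(y,s), \qquad \th(x,t) = \frac{1}{\sqrt t}\, \Theta(y,s), \qquad p(x,t) = \frac1t\, P(y,s),
\]
so that $\la$-DSS solutions correspond to profiles $(V,\Theta,P)$ that are periodic in $s$ with period $T = 2\log\la$. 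The Oberbeck--Boussinesq system becomes a nonautonomous (but $T$-periodic) system in $(y,s)$ with the drift term $-\tfrac12(1 + y\cdot\nabla)$ adjoined to $-\Delta$; crucially, since $G(x) = |x|^{-1}$ is $(-1)$-homogeneous, $\nabla G$ is $(-2)$-homogeneous, and the buoyancy term $\th\nabla G$ has exactly the right scaling to appear in the profile equation as a $T$-periodic forcing of the velocity profile.

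Next I would split off the linear part. Let $\bar V = \sqrt t\, e^{t\Delta}v_0$ and $\bar\Theta = \sqrt t\, e^{t\Delta}\th_0$ be the profiles of the caloric extensions; the hypothesis $v_0,\th_0 \in L^{3,\infty}$ gives, by standard heat-semigroup estimates on Lorentz spaces, that $\bar V, \bar\Theta$ and their similarity-variable derivatives lie in the appropriate $L^p_y$ spaces uniformly in $s$, with a bound governed by $c_0$. One must then further revise these profiles (as in \cite{BT1,BT2}) so that the deviations $U = V - \bar V_{\mathrm{rev}}$ and $H = \Theta - \bar\Theta_{\mathrm{rev}}$ satisfy homogeneous-type equations amenable to energy estimates in $L^2_y$; in particular the revised profiles should solve the linear (Stokes/heat with drift) part exactly, including the contributions of $f$ and of $\th\nabla G$ evaluated on the linear temperature, so that $(U,H)$ solves a perturbed system whose energy estimate closes. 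Testing the $U$-equation with $U$ and the $H$-equation with $H$, the trilinear terms $\int (U\cdot\nabla)U\cdot U$ and $\int (U\cdot\nabla)H\, H$ vanish by incompressibility, leaving only quadratic and linear terms; the genuinely new feature is the coupling term $\int (H\, \nabla G)\cdot U$, which is bilinear in the deviations and must be controlled. Using $\nabla G \in L^{3,\infty}$, Hölder in Lorentz spaces and Sobolev embedding $\dot H^1 \hookrightarrow L^{6,2}$, one estimates $\big|\int H\,\nabla G\cdot U\big| \lesssim \|H\|_{L^{6,2}}\|\nabla G\|_{L^{3,\infty}}\|U\|_{L^2} \lesssim \|\nabla H\|_{L^2}\|U\|_{L^2}$, which can be absorbed into $\tfrac12\|\nabla H\|_{L^2}^2 + \tfrac12\|\nabla U\|_{L^2}^2$ after a Cauchy--Schwarz split --- but only if the coefficient is small enough, i.e., only after the two energy estimates (velocity and temperature) are added with appropriate weights. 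This is the combination the introduction alludes to, and getting the constants to work out is the crux; the reward is an explicit differential inequality $\tfrac{d}{ds}(\|U\|^2 + \|H\|^2) + (\|\nabla U\|^2 + \|\nabla H\|^2) \le A(\|U\|^2 + \|H\|^2) + B$ with $A,B$ explicit in $c_0$ and $\|f\|$, from which $T$-periodicity (or a Gronwall-type argument over one period together with a compactness/Brouwer fixed-point selection of the initial value) yields an explicit a priori bound $\|U(s)\|_{L^2} + \|H(s)\|_{L^2} \le M_0$ uniform in $s$.

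With the a priori bound in hand, existence of a $T$-periodic profile solution follows by a standard Galerkin approximation in $y$ combined with the Leray--Schauder (or Brouwer, on the Galerkin level) fixed-point theorem applied to the period map, using the a priori bound to confine the fixed-point search and the parabolic smoothing to pass to the limit; the pressure $P$ is recovered from the divergence-free constraint via the (weighted) Helmholtz projection, giving $P \in L^{3/2}_{\loc}$. Translating back via $v(x,t) = t^{-1/2}V(x/\sqrt t, \log t)$ etc., DSS invariance of $(v,\th,p)$ is immediate, the membership $v - e^{t\Delta}v_0 \in L^\infty(1,\la^2; L^2) \cap L^2(1,\la^2; H^1)$ is the deviation bound read in original variables, and the estimate \eqref{th:main-est} is exactly $\|U(s)\|_{L^2} + \|H(s)\|_{L^2} \le M_0$ rewritten with the Jacobian factor $t^{3/4}$ from $dx = t^{3/2}dy$ together with the $t^{-1/2}$ prefactor, yielding the $t^{1/4}$ power. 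Finally, the local energy inequalities \eqref{localEnergyIneq} are inherited from the Galerkin scheme: each Galerkin solution satisfies the corresponding equalities, and lower semicontinuity of the $\|\nabla\cdot\|_{L^2}^2$ norms under weak convergence downgrades these to the stated inequalities, separately for $v$ and for $\th$ since the two test-function identities never had to be combined. I expect the main obstacle to be precisely the bookkeeping around the coupling term $\th\nabla G$: choosing the revised profiles so that the deviation system is clean, and then balancing the weights in the combined energy inequality so that the buoyancy coupling is absorbed with an \emph{explicit} (not merely finite) constant, all while keeping the singularity of $\nabla G$ at the origin under control in the Lorentz-space framework.
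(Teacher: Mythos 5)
Your overall architecture --- similarity variables, a revised background profile, a weighted energy estimate for the deviation, Galerkin plus Brouwer on the period map, mollification to obtain the local energy inequalities, Riesz transforms for the pressure, and the $t^{1/4}$ bookkeeping --- is exactly the paper's. However, the two steps that make the argument close for \emph{large} data are misstated or missing. First, the role of the revision: you ask that the revised profiles ``solve the linear part exactly,'' absorbing $f$ and the buoyancy of the linear temperature. The paper does the opposite. The unrevised profile $(V_0,\Th_0)$ already satisfies $LV_0=L\Th_0=0$; the revision (cutting off a large ball $B_{R_0}$, with a gradient correction restoring $\div V_*=0$) \emph{destroys} this, leaving only the harmless source $LV_*,L\Th_*\in L^\infty(0,T;L^2)$ of \eqref{lem22c}. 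What the revision buys is \eqref{lem22b}: $\norm{V_*}_{L^\infty_sL^q}+\norm{\Th_*}_{L^\infty_sL^q}\le\al$ for a fixed $q>3$ with $\al$ arbitrarily small. That smallness is indispensable for the quadratic terms $\int U\cdot(\nb V_*)\cdot U$ and $\int U\cdot(\nb\Th_*)\Psi$, which you never address: they carry the large data and are controlled only as $C\al\norm{U}_{H^1}^2$ and $C\al\norm{U}_{H^1}\norm{\Psi}_{H^1}$ (see \eqref{2.14}, \eqref{2.16}). Without the factor $\al$ the energy estimate does not close for large $c_0$.

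Second, the coupling term. Since $\nb G=-x/|x|^3$ lies in $L^{3/2,\infty}$, not $L^{3,\infty}$, your Lorentz--H\"older bookkeeping is off by one power of $|x|$; the correct bound is Hardy's inequality, $|\int\Psi\,\nb G\cdot U|\le C\norm{\nb\Psi}_{L^2}\norm{\nb U}_{L^2}$ as in \eqref{2.15}, with a constant that is \emph{not} small. A symmetric Cauchy--Schwarz split therefore cannot absorb it into the dissipation of $\norm{U}^2+\norm{\Psi}^2$ with equal weights, and the differential inequality you write down, with a single constant $A$ multiplying $\norm{U}^2+\norm{H}^2$, is not what comes out. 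The actual structure is the asymmetric pair \eqref{2.11}: $a'+A\le C+CB$ with a large coefficient on $B$, and $b'+B\le C+C\al(A+B)$ with a small one; one multiplies the second inequality by a large weight $2C_1$ and then chooses $\al$ so small that $2C_1^2\al^2\le 1/8$. The large weight on the temperature inequality is affordable precisely because its right-hand side carries the factor $\al$ coming from the revised profile --- so your two gaps are really one: the weighting cannot be fixed without first getting the purpose of the revision right. The remainder of your sketch (period map, mollified Galerkin scheme, pressure recovery, scaling back) matches the paper and is fine.
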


Above, $L^{q,r}$ denotes Lorentz spaces. By the imbedding $H^1(\R^3) \subset L^{6,2}(\R^3)$ (see e.g.~\cite[Remark 7.29]{AdamsFournier}), we have $(L^{6/5,2} + L^2)(\R^3) \subset H^{-1}(\R^3)$.

Note that we do not require $v_0,\th_0\in L^\infty_\loc(\R^3\setminus \{ 0\})$ as in \cite{BK}. We only need  
$v_0,\th_0\in L^{3,\infty}(\R^3)$. 
When they are $\la$-DSS, being in $L^{3,\infty}(\R^3)$ is equivalent to being in $L^3_\loc(\R^3\setminus \{ 0\})$, as shown in \cite[Lemma 3.1]{BT1}.

\medskip
In the special case that $(v_0,\th_0,f)$ is self-similar, the solution in Theorem \ref{th:main} can also be self-similar, as stated in the following theorem.

\begin{thm}[Self-similar solution]
\label{th:selfsimilardata}
Let $(v_0,\th_0)$ be a divergence free vector field and a function
in $\R^3$ which are $(-1)$-homogeneous and satisfy \eqref{ineq:decayingdata}
for a possibly large constant $c_0$. 
Assume $f$ is $(-3)$-homogeneous with $f(\cdot,1)\in (L^{6/5,2} + L^2)(\R^3)$.
Then, there exists a self-similar distributional solution $(v,\th,p)$ to \eqref{OB1} which satisfies \eqref{th:main-est}
for any $t\in (0,\infty)$ and a constant $C_0=C_0(v_0,\th_0)$.
\end{thm}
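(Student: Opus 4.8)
The plan is to deduce the self-similar case from Theorem \ref{th:main} by a symmetrization (averaging) argument over the scaling group, exploiting uniqueness-type structure in the construction rather than uniqueness of the PDE itself. First I would observe that $(-1)$-homogeneous data is a special case of $\la$-DSS data for \emph{every} $\la>1$: if $v_0(x)=\la v_0(\la x)$ for all $\la>0$, then in particular $v_0$ is $\la$-DSS for any fixed $\la>1$, and likewise $\th_0$ is $\la$-DSS and $f$ (being $(-3)$-homogeneous) is $\la$-DSS in the sense $f(x,t)=\la^3 f(\la x,\la^2 t)$. The hypothesis $f(\cdot,1)\in (L^{6/5,2}+L^2)(\R^3)$ together with $(-3)$-homogeneity gives, after the change of variables $t\mapsto \la^2 t$ and the scaling of Lorentz norms, that $f\in L^\infty(1,\la^2;(L^{6/5,2}+L^2)(\R^3))$; so all hypotheses of Theorem \ref{th:main} hold. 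Hence for this data we already obtain a $\la$-DSS distributional solution $(v,\th,p)$ satisfying the local energy inequalities and the bound \eqref{th:main-est}.

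The point is then to upgrade DSS to full self-similarity. Here I would inspect the construction underlying Theorem \ref{th:main}: it proceeds in similarity variables, solving for the deviation $U=v-e^{t\Delta}v_0$, $\Theta=\th-e^{t\Delta}\th_0$ from a revised profile, with an \emph{explicit} a priori bound that is uniform in the DSS factor. When the data is $(-1)$-homogeneous, the revised background profile is genuinely stationary (time-independent) in similarity variables, because $e^{t\Delta}v_0$ is itself self-similar, $e^{t\Delta}v_0(x)=t^{-1/2}E(x/\sqrt t)$ for a fixed profile $E$. Thus the similarity-variable system for $(U,\Theta)$ becomes autonomous, and a $\la$-periodic-in-similarity-time solution produced by the construction is obtained as a fixed point / limit in a space on which the scaling flow acts; averaging the solution over one period $s\in[\log 1,\log\la^2]$ of the similarity time (equivalently, over $\la\in[1,\la_0]$ multiplicatively) produces a time-independent solution in similarity variables, i.e.\ a self-similar solution in original variables. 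Alternatively, and perhaps more cleanly, one reruns the construction of Theorem \ref{th:main} verbatim but in the \emph{stationary} similarity-variable setting: the explicit a priori bound still applies (it does not use $\la$-periodicity, only the energy structure and the absorption of the $\th\nb G$ term), so one directly obtains a stationary weak solution $(U,\Theta)$ of the profile system, and translating back gives a self-similar $(v,\th,p)$. Either route yields a solution still satisfying \eqref{th:main-est}, since that estimate is exactly the a priori bound on $\|U(t)\|_{L^2}+\|\Theta(t)\|_{L^2}$ carried through the construction.

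The main obstacle I anticipate is making the passage ``DSS $\Rightarrow$ self-similar'' rigorous without a uniqueness theorem: one cannot simply say ``the solution is unique, hence invariant under the extra scalings''. The averaging argument sidesteps this, but one must check that (i) the solution space in the construction is closed under the scaling action $v\mapsto \la v(\la\cdot,\la^2\cdot)$ and under averaging over $\la$, (ii) the averaged object is still a distributional solution — this requires care because the equation is nonlinear, so one should instead average \emph{at the level of the construction} (e.g. average the fixed-point map, or pass to the limit of DSS solutions with factors $\la_k\downarrow 1$ using the uniform explicit bound to extract a compactness limit that is $\la_k$-DSS for all $k$, hence self-similar) — and (iii) the local energy inequalities and the bound \eqref{th:main-est} survive the limit, which follows from weak lower semicontinuity of the norms and from the linearity of the inequality in the test function $\phi$. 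The cleanest execution is the compactness route: take $\la_k\to 1^+$, apply Theorem \ref{th:main} to get $(v_k,\th_k,p_k)$ that is $\la_k$-DSS with uniform bounds from \eqref{th:main-est} and the local energy inequalities; extract a subsequential limit $(v,\th,p)$; it inherits the bounds, the local energy inequalities, and being a distributional solution; and being a limit of $\la_k$-DSS solutions with $\la_k\to 1$ forces it to be invariant under all scalings, i.e.\ self-similar. I would write the proof along these lines, with the bulk of the work in verifying the uniform bounds (already supplied by Theorem \ref{th:main}) and the stability of all three properties under the limit.
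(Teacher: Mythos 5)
Your second, parenthetical route --- rerun the construction directly in the \emph{stationary} similarity-variable setting --- is the route the paper actually takes, but it is not quite ``verbatim'': for stationary profiles the Galerkin system \eqref{eq:ODE} degenerates from a periodic ODE system into an algebraic system $P(x)=0$ in $\R^{2k}$, and the paper solves it not by a fixed point of a time-$T$ map but by the variant of Brouwer's theorem asserting that $P(x)\cdot x\le 0$ on $|x|=\rho$ forces a zero inside $B_\rho$; the weighted norm $|x|^2=\|U\|_{L^2}^2+2C_1\|\Psi\|_{L^2}^2$ is again essential to absorb the large quadratic coupling $\Psi\nb G$, and the mollification $\eta_\e*$ is dropped because stationary weak solutions are automatically smooth. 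These adjustments are routine but they \emph{are} the content of the proof, so they need to be carried out rather than asserted.

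The route you say you would actually write --- take $\la_k\downarrow 1$, apply Theorem \ref{th:main}, and extract a compactness limit --- is genuinely different from the paper's and is plausible, but as described it has concrete gaps. First, you must check that the constants of Theorem \ref{th:main} are uniform as $\la_k\to 1^+$: for homogeneous data the revised profile, $\al$, $R_0$, $C_1$, $C_2$ are independent of $\la$, and one must verify that the periodic Gronwall argument yields a radius $\rho$ that stays bounded as $T=2\log\la_k\to 0$ and that the $L^2_sH^1_y$ bound over one shrinking period propagates by periodicity to a uniform bound on a fixed time interval. Second, and more seriously, ``weak lower semicontinuity'' does not let you pass to the limit in $v_k\cdot\nb v_k$, $v_k\cdot\nb\th_k$, or in the cubic and pressure terms of \eqref{localEnergyIneq}; you need local strong space-time compactness (an Aubin--Lions argument using uniform bounds on $\pd_t v_k$, together with a uniform $L^{5/3}_{\loc}$ pressure bound), and the identification of the limit as invariant under \emph{every} scaling (via $\la_k^{n_k}\to\mu$ for arbitrary $\mu>0$) also hinges on this strong convergence. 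Finally, the averaging-over-the-scaling-group idea should simply be discarded: averages of solutions of a nonlinear system are not solutions, as you yourself note. In short, your alternative is workable but substantially longer than the paper's direct stationary construction, which avoids all limits in $\la$.
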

The same comment on $v_0,\th_0$ for Theorem \ref{th:main} is still valid for Theorem \ref{th:selfsimilardata}.

As mentioned previously, the proofs of  Theorems \ref{th:main} and \ref{th:selfsimilardata} are based on an explicit a priori bound for the deviation from a suitably revised background profile,
for the system in similarity variables.

The rest of the paper is organized as follows: 
In Section \ref{sec2} we introduce the similarity transform and give properties of the revised profile for the system in similarity variables.
In Section \ref{sec3} we construct discretely self-similar solutions for Theorem \ref{th:main} and self-similar solutions for Theorem \ref{th:selfsimilardata}.
In Section \ref{sec4} we propose a system which does not seem to have an explicit a priori bound. 

\section{Similarity transform and revised profile}\label{sec2}
In this section we first introduce the similarity transform for \eqref{OB1}, following \cite{BT1} for Navier-Stokes equations. We then study properties of a suitably revised profile in similarity variables, and finally show
an explicit a priori bound for the deviation from the revised profile.

\subsection{Similarity transform}
Introduce the similarity transform
\EQS{\label{similarity-transform}
v(x,t)&=\frac1{\sqrt t}\,V(y,s), \quad
\th(x,t)=\frac1{\sqrt t}\,\Th(y,s), \\
p(x,t)&=\frac1{ t}\,P(y,s),\quad \hspace{7pt} f(x,t)=\frac1{\sqrt t^3}\,F(y,s),
}
with
the similarity variables
\EQ{\label{similarity-variables}
y=\frac x {\sqrt t},\quad s = \log t.
}
Note that $G(x)=\frac 1{\sqrt t}G(y)$.
The system \eqref{OB1} becomes
\EQS{\label{Leray}
LV +V\cdot\nabla V+\nabla P &= \Th \nb G+F,\\
\nabla \cdot V&=0,\\
L \Th +V\cdot\nabla \Th& = 0,
}
where the linear operator
\[
L=\partial_s -\Delta  - \frac 12   - \frac 12  y\cdot \nb  .
\]

When $(v,\th)$ is self-similar, the $(V,\Th,F)$ is stationary, i.e., independent of $s$.
When $(v,\th)$ is $\la$-DSS, the $(V,\Th,F)$ is periodic in $s$ with period $T=2\log \la$.
The assumption of Theorem \ref{th:main} implies that $F(y,s)$ is $T$-periodic and
\EQ{ \label{F-cond}
F \in L^\infty(0,T; (L^{6/5,2} + L^2)(\R^3))
\subset L^\infty(0,T; H^{-1}(\R^3)).
}

\subsection{Spatial profile and revised profile}

The heat solutions with initial data $v_0,\th_0$ are transformed accordingly,
\EQ{\label{U0def}
(e^{t\De} v_0) (x)=\frac1{\sqrt t}\, V_0(y,s), \quad
(e^{t\De} \th_0) (x)=\frac1{\sqrt t} \,\Th_0(y,s).
}
The initial condition \eqref{OB2} for $(v,\th)$ implies that $(V_0,\Th_0)$ is the asymptotic spatial profile for $(V,\Th)$ as $|y| \to \infty$. As observed in \cite{BT1}, this can be realized by imposing
\[
\left \{\begin{aligned} 
V-V_0 , \Th - \Th_0&\in L^2(\R^3)  \hspace{24.5mm} \text{in the self-similar case},\\[1pt]
V-V_0 , \Th - \Th_0&\in L^\infty(0,T;L^2(\R^3)) \qquad \text{in the $\la$-DSS case}.
\end{aligned}\right.
\]

\begin{lem}[Profile properties]\label{profile}
Assume $v_0,\th_0 \in L^{3,\infty}(\R^3)$ is DSS with factor $\la>1$. The profile $(V_0,\Th_0)$ defined by \eqref{U0def} is continuously differentiable in $y$ and $s$, and periodic in $s$ with period $T=2\log \la$. They satisfy $LV_0=0$, $\div V_0=0$,  $L\Th_0=0$, and for any $q \in (3,\infty]$,
\EQN{
&V_0,\Th_0 \in L^\infty(0,T; L^{3,\infty} \cap L^q\cap L^\infty(\R^3)),\\
&\pd_s V_0,\pd_s \Th_0 ,
\nb V_0,\nb \Th_0 \in L^\infty(B_R\times (0,T)),\quad \forall 0<R<\infty,
}
and
\[
\sup_{s\in [0,T]} \norm{|V_0|+|\Th_0|}_{L^q(\R^3 \setminus B_R)} \le \mu(R),
\]
for some $\mu:[0,\infty) \to [0,\infty)$ depending on $v_0,\th_0,q$ such that $\mu(R) \to 0$ as $R \to \infty$.
\end{lem}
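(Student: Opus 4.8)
The plan is to obtain all stated properties by writing $V_0,\Th_0$ explicitly in similarity variables and transferring known facts about the heat semigroup acting on $L^{3,\infty}$ data. From \eqref{U0def} and \eqref{similarity-variables}, undoing the scaling gives the closed formula
\[
V_0(y,s) = e^{s/2}\,(e^{e^s\De}v_0)(e^{s/2}y), \qquad \Th_0(y,s) = e^{s/2}\,(e^{e^s\De}\th_0)(e^{s/2}y),
\]
valid for $s\in\R$; equivalently $V_0(\cdot,s)=(e^{\De}v_0^{(s)})(\cdot)$ where $v_0^{(s)}(y)=e^{s/2}v_0(e^{s/2}y)$ is a rescaling of $v_0$. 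First I would record that since $v_0\in L^{3,\infty}$ is homogeneous of degree $-1$ under the $\la$-DSS scaling (combined with rotations), the rescaled data $v_0^{(s)}$ for $s$ and $s+T$ coincide, so $V_0,\Th_0$ are $T$-periodic in $s$ with $T=2\log\la$; smoothness in $y$ and $s$ is immediate because the heat kernel is smooth and $e^{t\De}$ maps $L^{3,\infty}$ into $C^\infty$ for $t>0$, with all $y$- and $s$-derivatives obtained by differentiating under the integral. The equations $LV_0=0$, $L\Th_0=0$, $\div V_0=0$ follow by a direct computation: applying the similarity transform \eqref{similarity-transform} to the identities $(\pd_t-\De)e^{t\De}v_0=0$ and $\div(e^{t\De}v_0)=0$ turns the heat operator into $L$ exactly as in the derivation of \eqref{Leray}, which is the standard computation from \cite{BT1}.

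Next I would establish the spatial integrability. For the $L^{3,\infty}$ bound, note $\|e^{t\De}g\|_{L^{3,\infty}}\le C\|g\|_{L^{3,\infty}}$ (the heat semigroup is bounded on $L^{3,\infty}$), and the scaling in the formula for $V_0$ is $L^{3,\infty}$-invariant, so $\|V_0(\cdot,s)\|_{L^{3,\infty}}\le C\|v_0\|_{L^{3,\infty}}$ uniformly in $s$; likewise for $\Th_0$. For $q\in(3,\infty]$ I would use the smoothing estimate $\|e^{t\De}g\|_{L^q}\le Ct^{-(3/2)(1/3 - 1/q)\cdot?}$—more precisely the Lorentz-space heat estimate $\|e^{t\De}g\|_{L^q}\le C t^{-\frac32(\frac1{3}-\frac1q)}\|g\|_{L^{3,\infty}}$ for $q>3$ (with the $L^\infty$ case $\|e^{t\De}g\|_{L^\infty}\le Ct^{-1/2}\|g\|_{L^{3,\infty}}$), evaluate at $t=e^s$, and account for the $e^{s/2}$ prefactor and the dilation of the argument; the powers of $e^s$ cancel by scaling, giving $\|V_0(\cdot,s)\|_{L^q}\le C\|v_0\|_{L^{3,\infty}}$ uniformly in $s\in[0,T]$. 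The local bounds on $\nb V_0,\nb\Th_0,\pd_sV_0,\pd_s\Th_0$ on $B_R\times(0,T)$ follow from the same formula: each such derivative is again an expression of the form (polynomial in $y$)$\times e^{cs}\times (e^{t\De}\,\text{(rescaled data)})$ evaluated at bounded arguments with $t=e^s\in[1,\la^2]$ bounded away from $0$ and $\infty$, so the heat kernel smoothing gives uniform bounds, and $y\in B_R$ keeps the polynomial factors bounded.

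For the decay estimate $\sup_s\|\,|V_0|+|\Th_0|\,\|_{L^q(\R^3\setminus B_R)}\le\mu(R)\to0$, I would argue by splitting the data: given $\ve>0$, since $L^{3,\infty}$ functions can be approximated—or rather, using that $e^{t\De}$ applied to the "far" part of $v_0$ is small—write $v_0 = v_0\mathbf 1_{B_\rho}+v_0\mathbf 1_{B_\rho^c}$; the $L^{3,\infty}$ norm of the tail $v_0\mathbf 1_{B_\rho^c}$ does \emph{not} go to zero in general, so instead I would exploit the uniform bound in $L^q(\R^3)$ just proved together with a tightness argument: for fixed $s$, $V_0(\cdot,s)\in L^q(\R^3)$, and one shows the family $\{V_0(\cdot,s):s\in[0,T]\}$ is uniformly $q$-integrable at infinity. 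Concretely, using the Gaussian decay of the heat kernel one bounds $|V_0(y,s)|$ for $|y|$ large by a convolution that transfers spatial decay: $\|e^{t\De}g\|_{L^q(|x|>R)}\lesssim \|g\|_{L^{3,\infty}(|x|>R/2)} + (\text{Gaussian tail})\|g\|_{L^{3,\infty}}$, and after rescaling the $R$-threshold is preserved; since $\|v_0\|_{L^{3,\infty}(|x|>R/2)}$ need not vanish, one uses instead that $v_0\in L^3_{\loc}(\R^3\setminus\{0\})$—equivalent to $L^{3,\infty}$ for DSS data by \cite[Lemma 3.1]{BT1}—so the annular pieces of $v_0$ are genuinely in $L^3$ and the DSS structure makes the $L^3$ mass on dyadic annuli $|x|\sim 2^k$ comparable, forcing the convolution tail to decay. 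The cleanest route, and the one I expect to be the main technical point, is to prove $V_0(\cdot,s)\to0$ in $L^q(\R^3\setminus B_R)$ uniformly in $s$ by dominating $|V_0(y,s)|$ pointwise, for $|y|\ge R$, by $C(\mathcal M_{3,\infty}v_0)$-type quantities that themselves lie in $L^q(\R^3\setminus B_{cR})$ with small norm; define $\mu(R)$ to be the resulting supremum over $s$. The continuity of $s\mapsto V_0(\cdot,s)$ in $L^q$ (from smoothness) plus compactness of $[0,T]$ then upgrades "for each $s$" to "uniformly in $s$," completing the proof.
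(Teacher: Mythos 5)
The paper does not actually prove this lemma: it cites \cite[Lemmas 3.2 \& 3.4]{BT1} verbatim and only adds the one-line observation that $V_0\in L^\infty(0,T;L^{3,\infty})$ follows from $\norm{e^{t\De}v_0}_{L^{3,\infty}}\lesssim\norm{v_0}_{L^{3,\infty}}$. Your reconstruction follows the same route as that reference: the explicit formula $V_0(y,s)=e^{s/2}(e^{e^s\De}v_0)(e^{s/2}y)$, periodicity from the interplay of the DSS scaling with the heat kernel scaling, $LV_0=0$ and $\div V_0=0$ by transforming the heat equation as in the derivation of \eqref{Leray}, and the uniform $L^{3,\infty}$, $L^q$ ($q>3$) and $L^\infty$ bounds from Lorentz-space smoothing estimates, with the powers of $e^s$ cancelling exactly by criticality. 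All of that is correct and is essentially the argument of \cite{BT1}.

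The gap is in the decay estimate, which is the only genuinely nontrivial point of the lemma. Your fallback argument --- $V_0(\cdot,s)\in L^q(\R^3)$ for each fixed $s$, continuity of $s\mapsto V_0(\cdot,s)$ in $L^q(\R^3)$, compactness of $[0,T]$, and a Dini-type upgrade using monotonicity in $R$ --- does work for $3<q<\infty$, but only if you justify \emph{global} $L^q(\R^3)$-continuity in $s$; ``smoothness'' gives only local control, and what you actually need is strong continuity of $t\mapsto e^{t\De}v_0$ in $L^q$ for $t\ge 1$ combined with continuity of dilations in $L^q$. More importantly, this argument gives nothing for $q=\infty$, which the lemma explicitly includes: membership in $L^\infty$ does not imply decay at spatial infinity, so the ``for each $s$'' statement that you would bootstrap is itself false without a pointwise decay proof. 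Your sketch of that pointwise decay contains a non sequitur: the fact that the $L^3$ mass of $v_0$ on dyadic annuli is \emph{constant} under the DSS scaling does not ``force the convolution tail to decay.'' The actual mechanism is that for $|x|\sim\la^k$ and $t\in[1,\la^2]$ the Gaussian localizes the convolution to a ball $B(x,M)$ up to a rapidly decaying remainder, and the DSS rescaling sends $\int_{B(x,M)}|v_0|^3$ to $\int_{B(x',M\la^{-k})}|v_0|^3$ with $x'$ in the unit annulus; since $v_0\in L^3_\loc(\R^3\setminus\{0\})$ (this is where \cite[Lemma 3.1]{BT1} enters), absolute continuity of the integral over \emph{shrinking} balls makes this tend to $0$ uniformly in $x'$. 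The ``maximal-function domination'' you offer as the cleanest route is too vague to verify and, as far as I can see, runs back into the obstruction you yourself identified, namely that the $L^{3,\infty}$ norm of the far part of $v_0$ does not decay. So: correct skeleton and correct easy steps, but the decay mechanism --- in particular the $q=\infty$ case --- is not actually established.
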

This is \cite[Lemmas 3.2 \& 3.4]{BT1}. That $V_0\in L^\infty(0,T; L^{3,\infty}(\R^3))$ is not stated in \cite{BT1} but follows directly from $\norm{e^{t\De} v_0}_{L^\infty(0,\infty;L^{3,\infty})} \lec \norm{v_0}_{L^{3,\infty}}$.

To make sense of local energy equality we need $p\in L^{3/2}_{x,t,\loc}$, which requires both $V-V_0$ and $V_0$ in $L^3_{x,t,\loc}$. 
As $V-V_0$ will be sought in the energy class which imbeds into %
$L^{10/3}(\R^3\times (0,T))$, it is convenient to take $q = 10/3$.

Next we will revise the profile $(V_0,\Th_0)$ by essentially
removing its mass in a sufficiently large ball. This does not change its spatial decay, so the revised profile is still large in $L^{3,\infty}$. However,  its $L^q$-norm can be made arbitrarily small for any $q>3$. 

Fix $Z \in C^\infty(\R^3)$ with $0 \le Z \le 1$, $Z(y)=1$ for $|y|>1$ and $Z(y)=0$ for $|y|<1/2$. 
Let $\xi(y)=Z(y/R_0)$ for $R_0$ sufficiently large, and
\[
V_*(y,s) = \xi(y) V_0(y,s) + w, \quad w(y,s)= \nb_y \int_{\R^3}\frac 1{4\pi|y-z|} \nb  \xi(z) \cdot V_0(z,s)\,dz,
\]
\[
\Th_*(y,s) = \xi(y) \Th_0(y,s) .
\]
The correction term $w(y,s)$ is to make $\div V_*=0$.

\begin{lem}[Revised profile]\label{rev-profile}
For any $\al >0$ and $q>3$, there is $R_0=R_0(\al,q,U_0,\Th_0)$ so that $V_*$ and $\Th_*$ given above are continuously differentiable in $y$ and $s$, $T$-periodic, $\div V_*=0$, 
\EQ{\label{lem22a}
V_* - V_0 , \Th_* - \Th_0 \in L^\infty(0,T;  L^2(\R^3)) \cap L^2(0,T;  H^1(\R^3)),
}
\EQ{\label{lem22b}
\norm{|V_*|+|\Th_*|} _{L^\infty(0,T;  L^q(\R^3))} \le \al,
\qquad
\norm{|V_*|+|\Th_*|} _{L^\infty(0,T;  L^4(\R^3))} \le C,
}
and
\EQ{\label{lem22c}
 \norm{L V_*} _{L^\infty(0,T;  L^2(\R^3))}+ \norm{L \Th_*} _{L^\infty(0,T;  L^2 (\R^3))}    \le C,
}
with $C=C(R_0,U_0,\Th_0)$.
\end{lem}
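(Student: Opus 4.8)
The plan is to verify each of the four claimed properties in turn, tracking how they depend on the cutoff radius $R_0$, and then to choose $R_0$ large at the very end to make the $L^q$-norm in \eqref{lem22b} as small as $\al$. Recall $V_* = \xi V_0 + w$ with $\xi(y) = Z(y/R_0)$ and $w$ the Biot--Savart-type correction, and $\Th_* = \xi \Th_0$. The regularity and $T$-periodicity in $(y,s)$ are inherited from Lemma \ref{profile}: $V_0,\Th_0$ are $C^1$ and $T$-periodic, $\xi$ is smooth, and $w$ is obtained by convolving the smooth compactly supported function $\nb\xi \cdot V_0$ (supported in the annulus $R_0/2 \le |y| \le R_0$, where $\nb V_0$ is bounded by Lemma \ref{profile}) against the Newtonian potential, so $w$ is smooth in $y$ and $C^1$ in $s$. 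The identity $\div V_* = 0$ is a direct computation: $\div(\xi V_0) = \nb\xi\cdot V_0$ since $\div V_0 = 0$, while $\div w = \De(\text{Newtonian potential of }\nb\xi\cdot V_0) = -\nb\xi\cdot V_0$ by the defining PDE for the potential, so the two cancel.

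For \eqref{lem22a}, write $V_* - V_0 = (\xi-1)V_0 + w$. The first term is supported in $|y| \le R_0$, where $V_0$ and $\nb V_0$ are bounded (Lemma \ref{profile}), so $(\xi-1)V_0 \in L^\infty(0,T;L^2\cap H^1)$. For $w$: since $\nb\xi\cdot V_0$ is bounded with compact support uniformly in $s\in[0,T]$, standard elliptic estimates for the Newtonian potential give $w \in L^\infty(0,T;L^2)$ (using the $|y|^{-2}$ decay of $w$ at infinity, which is integrable at infinity in $\R^3$ after squaring, together with local boundedness) and $\nb w \in L^\infty(0,T;L^2)$ from Calderón--Zygmund, hence $w\in L^\infty(0,T;H^1)$. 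The same argument handles $\Th_* - \Th_0 = (\xi-1)\Th_0$, which is even simpler as there is no correction term. The $L^2(0,T;H^1)$ membership follows since the bounds are uniform on $[0,T]$.

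For \eqref{lem22b} and \eqref{lem22c}, the $L^4$ and $L^q$ bounds: $\xi V_0$ and $\xi\Th_0$ are pointwise bounded by $|V_0|,|\Th_0| \in L^\infty(0,T;L^q\cap L^\infty)$, so their $L^q$-norms are controlled, and by Lemma \ref{profile} the tail bound gives $\norm{\xi V_0}_{L^q(\R^3)} \le \norm{V_0}_{L^q(\R^3\setminus B_{R_0/2})} \le \mu(R_0/2)$, which tends to $0$ as $R_0\to\infty$; the correction $w$ decays like $|y|^{-2}$ with a coefficient proportional to $\int |\nb\xi\cdot V_0| \lec R_0^{-1}\cdot R_0^3 \cdot \norm{V_0}_{L^\infty}$ spread over the annulus, and a careful estimate shows $\norm{w}_{L^q(\R^3)}$ is also small for large $R_0$ when $q>3$ (this is where $q>3$ enters: $|y|^{-2} \in L^q$ near infinity iff $q>3/2$, but one needs the scaling in $R_0$ to beat the growing support, and the numerology works precisely for $q>3$). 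Choosing $R_0$ large enough makes the sum $\le \al$; the $L^4$ bound is then just a fixed constant $C(R_0,U_0,\Th_0)$. Finally \eqref{lem22c}: since $LV_0 = 0$ and $L\Th_0 = 0$, we have $L(\xi V_0) = -(\De\xi) V_0 - 2\nb\xi\cdot\nb V_0 - \tfrac12 (y\cdot\nb\xi)V_0$, which is supported in the annulus where all factors are bounded, hence in $L^\infty(0,T;L^2)$; similarly $L(\xi\Th_0)$. For $Lw$, note $\pd_s w$ is the Newtonian potential of $\nb\xi\cdot\pd_s V_0$ (bounded, compact support, by Lemma \ref{profile}), $\De w = -\nb(\nb\xi\cdot V_0)$ (bounded, compact support), and the scaling terms $\tfrac12 w + \tfrac12 y\cdot\nb w$ decay like $|y|^{-2}$ so are in $L^2$; collecting gives $Lw \in L^\infty(0,T;L^2)$ with norm $C(R_0,U_0,\Th_0)$.

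The main obstacle is the quantitative control of the correction term $w$: one must track precisely how $\norm{w}_{L^q}$ and $\norm{Lw}_{L^2}$ scale in $R_0$ to confirm that $w$ does not spoil the smallness in \eqref{lem22b} — the source $\nb\xi\cdot V_0$ has $L^1$-norm that does \emph{not} shrink as $R_0\to\infty$ (it is of order $\norm{V_0}_{L^\infty(\text{annulus})}$, which need not be small), so the smallness of $\norm{w}_{L^q}$ for $q>3$ must come entirely from the spatial spreading of the potential over the far-away annulus of radius $\sim R_0$, and this is the one point where the exponent restriction $q>3$ is genuinely used.
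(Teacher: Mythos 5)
Your overall route---a direct, term-by-term verification of each property of $V_*=\xi V_0+w$ and $\Th_*=\xi\Th_0$---is exactly the one the paper relies on: the paper's own proof simply cites \cite[Lemma 2.5]{BT1} (whose proof is this verification) and adds only the single new computation that $\De w=-\nb(\nb  \xi\cdot V_0)$ is bounded with compact support, which upgrades $LV_*$ from $L^\infty H^{-1}$ to $L^\infty L^2$; the $\Th_*$ statements are noted to follow by the same (easier) argument. Your treatment of $\div V_*=0$, of \eqref{lem22a}, of $L(\xi V_0)$, and of $Lw$ all check out against that.

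The one step that does not close as written is the smallness of $\norm{w}_{L^q}$, which you rightly single out as the main obstacle but then support with a heuristic that fails. Your bound $\norm{\nb\xi\cdot V_0}_{L^1}\lec R_0^{-1}\cdot R_0^{3}\cdot\norm{V_0}_{L^\infty}=R_0^{2}\norm{V_0}_{L^\infty}$, combined with the far-field decay $|w(y)|\lec \norm{\nb\xi\cdot V_0}_{L^1}\,|y|^{-2}$, gives $\norm{w}_{L^q(|y|>2R_0)}\lec R_0^{2}\cdot R_0^{3/q-2}=R_0^{3/q}$, which \emph{grows} with $R_0$; so, contrary to your closing diagnosis, spatial spreading alone does not produce smallness. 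The missing ingredient is the decay of $V_0$ on the annulus $\{R_0/2\le|y|\le R_0\}$, supplied by $V_0\in L^\infty(0,T;L^{3,\infty})$, or equivalently by the tail bound $\mu$ of Lemma \ref{profile}. Concretely, by the Hardy--Littlewood--Sobolev inequality, $\norm{w}_{L^q}\lec\norm{\nb\xi\cdot V_0}_{L^{r}}$ with $\frac1r=\frac1q+\frac13$ (here $q>3$ guarantees $r\in(3/2,3)$ so that HLS applies with target exponent $q$), and by H\"older, $\norm{\nb\xi\cdot V_0}_{L^{r}}\le\norm{\nb\xi}_{L^{3}}\norm{V_0}_{L^{q}(\R^3\setminus B_{R_0/2})}\lec\mu(R_0/2)$, since $\norm{\nb\xi}_{L^3}\lec R_0^{-1}\cdot R_0^{3/3}=O(1)$. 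This tends to $0$ as $R_0\to\infty$ and is where the hypothesis $q>3$ is genuinely used. With this replacement the rest of your argument is sound.
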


The statements for $V_*$ follows from \cite[Lemma 2.5]{BT1} 
except that, for \eqref{lem22c}, \cite[Lemma 2.5]{BT1} only states
$\norm{L V_*} _{L^\infty(0,T;  H^{-1} (\R^3))}<C$. Tracking its proof, the only missing piece for \eqref{lem22c} is $ \norm{\De w} _{L^\infty(0,T;  L^2(\R^3))}\le C$. This however follows from
\[
\De w = -\nb \div (\nb \xi \cdot V_0)
\]
and the right side is bounded with compact support.
The statement for $\Th_*$ 
follows from 
a similar but easier proof as it has no correction term.

\subsection{A priori bound for the deviation}

Consider $V=V_*+U$ and $\Th=\Th_*+\Psi$. By \eqref{Leray}, the deviation $(U,\Psi)$ satisfies
\begin{align}\label{Leray1}
LU +(V_*+U)\cdot\nabla (V_*+U)+\nabla P &= (\Th_*+\Psi) \nb G+F- LV_*,\\
\nabla \cdot U&=0,\label{Leray2}\\
L \Psi +(V_*+U)\cdot\nabla (\Th_*+\Psi)& = - L \Th_*.\label{Leray3}
\end{align}

We now give a formal argument to derive an a priori bound for the deviation $(U,\Psi)$, which will lead to the a priori bound \eqref{ineq:kenergyevolution} in the proof below for the approximation solutions. Multiply \eqref{Leray1} by $U$ and \eqref{Leray3} by $\Psi$ and integrate by parts over $\R^3 $ using \eqref{Leray2}, we get
\EQS{
\frac d{2ds} \int |U|^2 +  \int \bke{|\nb U|^2 + \frac 14 |U|^2} &= - \int [(V_*+\uwave{U)\cdot \nabla] V_* \cdot U}  - \int (L V_*)\cdot U\label{Leray4}
\\
&\quad + \int [ F+(\Th_*+\uwave{\Psi) \nb G]\cdot U},
}
\EQ{
\frac d{2ds} \int |\Psi|^2 +  \int \bke{|\nb \Psi|^2 + \frac 14 |\Psi|^2} = - \int (V_*+\uwave{U)\cdot(\nabla \Th_*) \Psi } - \int (L \Th_*)\Psi.\label{Leray5}
}
As cubic terms in $(U, \Psi)$ have vanished, the leading terms on the right sides are 3 quadratic terms, which are wavy underlined.
Restrict $3<q<4$ and let $m$ be defined by
\[
\frac 1q+\frac 1m=\frac 12, \quad 4<m<6.
\]
In \eqref{Leray4},
\EQ{\label{2.14}
- \int U\cdot(\nabla V_*)\cdot U  =   \int U\cdot(\nabla U)\cdot V_* \le \norm{\nb U}_{L^2}
\norm{ U}_{L^m}\norm{ V_*}_{L^q}
\le C \alpha  \norm{U}_{H^1}^2
}
by Lemma \ref{rev-profile},
and by Hardy's inequality,
\EQ{\label{2.15}
\int \Psi \nb G\cdot U \le C \norm{\nb \Psi}_{L^2} \norm{\nb U}_{L^2} \le \frac 14 \norm{\nb U}_{L^2}^2+C\norm{\nb \Psi}_{L^2} ^2.
}
In \eqref{Leray5}, by Lemma \ref{rev-profile} again,
\EQ{\label{2.16}
- \int U\cdot(\nabla \Th_*) \Psi  =   \int U\cdot(\nabla \Psi )\Th_* \le\norm{\nb \Psi}_{L^2}
\norm{ U}_{L^m}\norm{ \Th_*}_{L^q}\le C \alpha   \norm{U}_{H^1}\,  \norm{\Psi}_{H^1}.
}

Thus, for $a=\int |U|^2 $, $A=\int |U|^2 + |\nb U|^2$, $b=\int |\Psi|^2 $, $B=\int |\Psi|^2 + |\nb \Psi|^2$, and $\alpha >0$ sufficiently small, we have inequalities of the form
\EQ{\label{2.11}
a' + A \le C + CB, \quad b' + B \le C + C \alpha (A+B),
}
and we can show
\EQ{\label{2.12}
\sup_s (a(s)+b(s)) + \int_0^T (A+B) ds \le C.
}
See the proof of \eqref{ineq:kenergyevolution} for details.

\section{Construction of solutions}\label{sec3}
In this section we will first use the a priori bounds \eqref{2.11}--\eqref{2.12} and the Galerkin method to construct a periodic weak solution of \eqref{Leray1}--\eqref{Leray3} in the class
\EQ{
U, \Psi \in L^\infty(0,T; L^2(\R^3)) \cap L^2(0,T; H^1(\R^3)).
}
We will consider a mollified version of \eqref{Leray1}--\eqref{Leray3}, similar to \cite[(2.23)]{BT1}, so that we can establish local energy inequalities \eqref{localEnergyIneq} for the solution. 
Fix $\eta \in C^\infty_c(\R^3)$, supported in $B_1$ and satisfying $\int_{\R^3}\eta(y)\, dy= 1$. 
For $0<\e<1$, let $\eta_\e (y) = \e^{-3} \eta(y/\e)$ and consider
\begin{align}\label{mLeray1}
LU +(V_*+\eta_\e * U)\cdot\nabla (V_*+U)+\nabla P &= (\Th_*+\Psi) \nb G+F- LV_*,\\
\nabla \cdot U&=0,\label{mLeray2}\\
L \Psi +(V_*+\eta_\e * U)\cdot\nabla (\Th_*+\Psi)& = - L \Th_*.\label{mLeray3}
\end{align}

We now choose a Galerkin basis:
Choose vector $\{\phi_k \}_{k \in \NN} \subset C^\infty_{c,\si}(\R^3)$ which is orthonormal in $L^2_\si(\R^3)$ and whose linear span is dense in $H^1_\si(\R^3)$. 
Choose scalar $\{\beta_k \}_{k \in \NN} \subset C^\infty_{c}(\R^3)$ which is orthonormal in $L^2(\R^3)$ and whose linear span is dense in $H^1(\R^3)$. 

For a fixed $k\in\NN$, 
we look for an approximation solution of \eqref{mLeray1}--\eqref{mLeray3} in a finite dimensional subspace of the form
\EQ{\label{Vk-def}
U_k(y,s) = \sum_{i=1}^k b_{i}(s)\phi_i(y), \quad
\Psi_k(y,s) = \sum_{i=1}^k q_{i}(s)\beta_i(y).
}
Here we omit the dependence of $b_i$ and $q_i$ on $k$.
We first prove the existence of and \emph{a priori} bounds for $T$-periodic solutions $\vec b_{(k)}=(b_{1},\ldots,b_{k})$ and $\vec q_{(k)}=(q_{1},\ldots,q_{k})$ to the system of ODEs
\EQS{\label{eq:ODE}
\frac d {ds} b_{j} = & \sum_{i=1}^k \bke{A_{ij}b_{i} + B_{ij} q_i}+\sum_{i,l=1}^k C_{ilj} b_{i}b_{l} +D_j,\\
\frac d {ds} q_{j} = & \sum_{i=1}^k \bke{A_{ij} ^*b_{i} + B_{ij}^* q_i}+\sum_{i,l=1}^k C_{ilj}^* b_{i}q_{l} +D_j^*,
}
for $j\in \{1,\ldots,k\}$,
where
\begin{align}
\notag A_{ij}&=- (\nabla \phi_i,\nabla \phi_j) 
		+(\tfrac12 \phi_i+\tfrac12 y\cdot \nabla \phi_i  - V_*\cdot  \nabla \phi_i , \phi_j)  + (\eta_\epsilon *\phi_i\otimes V_*, \nb \phi_j)\\
B_{ij} &= (\beta_i\nb G, \phi_j)\\		
\notag C_{ilj}&=- ((\eta_\epsilon *\phi_i \cdot\nabla )\phi_l, \phi_j)
\\\notag D_j&=\langle \cR_b \, ,\phi_j\rangle, \quad \cR_b = - \div (V_* \otimes V_*) +  \Th_* \nb G+F- LV_*,
\end{align}
and 
\begin{align}
\notag A_{ij}^*&=-(\eta_\epsilon *\phi_i\cdot \nabla  \Th_*, \beta_j) \\
B_{ij}^* &= - (\nabla \beta_i ,\nabla \beta_j) 
		+(\tfrac12 \beta_i+\tfrac12 y\cdot \nabla \beta_i -V_*\cdot  \nabla \beta_i , \beta_j) \\		
\notag C_{ilj}^*&=- ((\eta_\epsilon *\phi_i \cdot\nabla) \beta_l, \beta_j)
\\\notag D_j^*&=\langle  \cR_q \, ,\beta_j\rangle, \quad
 \cR_q=- \div(V_* \Th_* )- L\Th_*.
\end{align}
Above 
By \eqref{F-cond}, Lemma \ref{rev-profile} and Hardy's inequality, $\cR_b$ and $\cR_q$ belong to $L^\infty(0,T;H^{-1}(\R^3))$, and all these coefficients are bounded functions of $s$.
(This is slightly better, but unused, than \cite{BT1} where we only said they are $L^2$ in $s$.)

\begin{lem}[Galerkin approximations]\label{lemma:Galerkin} 
Assume the same assumptions on $v_0,\th_0$ in Theorem \ref{th:main}. There is a sufficiently small $\al>0$ such that the following hold. 
\begin{enumerate}
\item For any $k\in \mathbb N$ and $0<\epsilon<1$, the system of ODEs \eqref{eq:ODE} has a $T$-periodic solution $\vec b_{(k)}=(b_{1},\ldots,b_{k}),\vec q_{(k)}=(q_{1},\ldots,q_{k}) \in H^1(0,T)$.
\item Let $U_k(y,s)$ and $\Psi_k(y,s)$ be defined by \eqref{Vk-def}.
We have 
\begin{equation}\label{ineq:uniformink}
||U_k,\Psi_k||_{L^\infty (0,T;L^2(\R^3))} + ||U_k,\Psi_k||_{L^2(0,T;H^1(\R^3))}<C,
\end{equation}
where $C$ is independent of both $\epsilon$ and $k$.
\end{enumerate}
\end{lem}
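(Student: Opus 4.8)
The plan is to first prove part (1) by a Brouwer-type fixed point / topological degree argument for periodic solutions of the ODE system \eqref{eq:ODE}, and then deduce part (2) by translating the formal energy estimates \eqref{2.11}--\eqref{2.12} into rigorous bounds for the Galerkin approximations, with constants uniform in $k$ and $\e$. For part (1), I would follow the standard approach for periodic solutions of dissipative ODE systems: consider the Poincar\'e (period-$T$) map $\Phi$ sending initial data $(\vec b(0),\vec q(0))$ to $(\vec b(T),\vec q(T))$, which is well-defined and continuous on any finite time interval once we know solutions do not blow up. The key input is an \emph{a priori} bound: any $T$-periodic solution stays in a fixed ball $B_\rho \subset \R^{2k}$ whose radius $\rho$ depends on the data but not on the particular solution. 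This bound comes precisely from the energy inequality computation below. Given such a bound, a Leray--Schauder or Brouwer degree argument (as in \cite[proof of Lemma 2.6]{BT1}) produces a fixed point of $\Phi$, hence a $T$-periodic solution; the $H^1(0,T)$ regularity is then automatic from the ODE since the right-hand side is bounded along the (bounded) solution.

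For the \emph{a priori} bound --- which serves both parts --- I would test the Galerkin equations against the solution itself: multiply the $b_j$-equation by $b_j$ and sum over $j$, and likewise multiply the $q_j$-equation by $q_j$ and sum. Because the basis functions span the relevant subspace and $U_k,\Psi_k$ lie in that subspace, the cubic terms $C_{ilj}b_ib_lb_j$ and $C_{ilj}^*b_iq_lq_j$ vanish by the antisymmetry coming from $\div(\eta_\e * U_k)=0$ (note $\eta_\e * U_k$ is still divergence free), exactly as in \eqref{Leray4}--\eqref{Leray5}. One then arrives at the finite-dimensional analogue of \eqref{Leray4}--\eqref{Leray5}, and the estimates \eqref{2.14}, \eqref{2.15}, \eqref{2.16} apply verbatim (using Lemma \ref{rev-profile} to make the coefficient in front of $\alpha$ small, Hardy's inequality for the $\Psi\,\nb G\cdot U$ term, and $\cR_b,\cR_q\in L^\infty(0,T;H^{-1})$ for the forcing terms, absorbed by Young's inequality into $\tfrac14\|\nb U\|_2^2$ and $\tfrac14\|\nb\Psi\|_2^2$). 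Setting $a=\|U_k\|_2^2$, $A=\|U_k\|_{H^1}^2$, $b=\|\Psi_k\|_2^2$, $B=\|\Psi_k\|_{H^1}^2$, I get the differential inequalities \eqref{2.11}. To close \eqref{2.12}: choose $\alpha$ small so that $C\alpha < \tfrac12$; from the second inequality, $b' + \tfrac12 B \le C + C\alpha A$; adding a suitable multiple of this to the first kills the $CB$ term on the right and yields $(a+\kappa b)' + c_1(A+B) \le C$ for constants $c_1>0$, $\kappa>0$ depending only on $C,\alpha$. Since $A\ge \tfrac14 a$ and $B\ge \tfrac14 b$, this is a differential inequality of the form $y' + c_2 y \le C$ with $y=a+\kappa b$; Gr\"onwall (in the periodic setting, using periodicity to fix the constant of integration rather than an initial value) gives $\sup_s y(s)\le C$, and then integrating \eqref{2.11} over $[0,T]$ gives $\int_0^T(A+B)\,ds\le C$. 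All constants depend only on $R_0,V_0,\Th_0,F$ and the fixed $\alpha$, hence are independent of $k$ and $\e$; this is \eqref{ineq:uniformink}.

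The main obstacle is bookkeeping rather than conceptual: one must check carefully that the mollification $\eta_\e *$ does not spoil the cancellation of the cubic terms (it does not, since convolution with $\eta_\e$ commutes with the divergence-free constraint and the transport structure $(\eta_\e * U_k)\cdot\nb(\cdot)$ tested against $(\cdot)$ still integrates to zero), and that the coupling term $(\Th_*+\Psi)\nb G\cdot U$ is genuinely controlled: the $\Th_*\,\nb G\cdot U$ piece is part of the bounded forcing $\cR_b$, while the dangerous $\Psi\,\nb G\cdot U$ piece is handled by \eqref{2.15} --- this is the one place where the two energy estimates must be \emph{combined} (the $C\|\nb\Psi\|_2^2$ produced there is absorbed only because the $\Psi$-equation supplies a matching $B$ on its left side). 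Verifying that after combining, the constant $\alpha$ can indeed be taken small enough \emph{independently} of $k,\e$ --- which it can, since $R_0$ (hence $\alpha$) is chosen once and for all in Lemma \ref{rev-profile} before the Galerkin truncation --- is the crux; everything else is the routine Gr\"onwall argument sketched above.
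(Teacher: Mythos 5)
Your proposal is correct and follows essentially the same route as the paper: test the Galerkin ODEs against $(\vec b,\vec q)$, use the divergence-free mollified drift to cancel the cubic terms, combine the two energy inequalities with a weight to absorb both the $C\|\nb\Psi\|_{L^2}^2$ term and the $C\al$-small terms, apply Gr\"onwall, and obtain periodicity via a Brouwer fixed point for the period-$T$ map on an invariant ball. The paper implements the fixed point by showing the Poincar\'e map in the weighted coordinates $(\vec b,\sqrt{2C_1}\vec q)$ maps a ball of radius $\rho$ into itself, which is exactly the invariance your combined differential inequality $y'+c_2 y\le C$ provides.
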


The smallness of $\al$ will be decided by \eqref{3.13} and \eqref{ineq:kenergyevolution}.

\begin{proof}
Fix $k\in \N$. Omit the subscript $k$ and denote $U=U_k(y,s)$ and $\Psi=\Psi_k(y,s)$. For any  $U^{0}\in \operatorname{span}(\phi_1,\ldots,\phi_k)$ and $\Psi^{0}\in \operatorname{span}(\beta_1,\ldots,\beta_k)$, 
there exist $b_{j}(s), q_j(s)$ uniquely solving \eqref{eq:ODE} with initial value $b_{j}(0)=(U^{0},\phi_j)$, $q_{j}(0)=(\Psi^{0},\beta_j)$, and belonging to $H^1(0,\tilde T)$ for some time $0<\tilde T\leq T$.  If $\tilde T<T$ assume it is maximal -- i.e.~$|\vec b_{(k)}(s)|+|\vec q_{(k)}(s)|\to\infty$ as $s\to \tilde T^-$.

Multiply the $j$-th equations of \eqref{eq:ODE} by $b_{j}$ and $q_j$ and sum. Using the cancelations
\EQS{\label{cancellation}
\tsum_{i,l,j=1}^k C_{ilj} b_{i}b_{l} b_j &= - \bke{(\eta_\epsilon *U \cdot\nabla) U, U}=0,
\qquad (-V_*\cdot \nb U,U)=0,\\
\tsum_{i,l,j=1}^k C_{ilj}^* b_{i}q_{l}q_j &= - \bke{(\eta_\epsilon *U \cdot\nabla) \Psi, \Psi}=0,
\qquad (-V_*\cdot \nb \Psi,\Psi)=0,
}
we obtain 
\begin{align} \label{ineq:1}
\frac 1 2 \frac d {ds} ||U||_{L^2}^2 + \frac 1 4 ||U||_{L^2}^2+ ||\nabla U||_{L^2}^2&=- ( (\eta_\epsilon *U \cdot\nabla)  V_* + \Psi \nb G, U ) + \langle \cR_b, U\rangle,
\\
\frac 1 2 \frac d {ds} ||\Psi||_{L^2}^2 + \frac 1 4 ||\Psi||_{L^2}^2+ ||\nabla \Psi||_{L^2}^2&=- ( (\eta_\epsilon *U \cdot\nabla)  \Th_* , \Psi ) + \langle \mathcal{R}_q, \Psi\rangle. 
\end{align}
These two equalities for $(U,\Psi)=(U_k,\Psi_k)$ correspond to \eqref{Leray4}-\eqref{Leray5}.
Recall
\[
\cR_b = - \div (V_* \otimes V_*) +  \Th_* \nb G+F- LV_*,
 \quad
 \cR_q=- \div (V_*  \Th_* )- L\Th_*.
\]
Thus, by Hardy's inequality
\begin{align}
|\bka{\cR_b,U}| &\le C (\|V_*  \|_{L^4}^2+\|\nb \Th_*  \|_{L^2}+ \norm{F}_{H^{-1}}+ \norm{LV_*}_{H^{-1}} ) \norm{U}_{H^1}  \le C_0+ \frac 1 8 ||U||_{H^1}^2 ,\notag
\\
|\bka{\cR_q,\Psi}| &\le C (\|V_*  \|_{L^4}\|\Th_*  \|_{L^4}+ \norm{L\Th_*}_{H^{-1}} ) \norm{\Psi}_{H^1}  \le C_0+ \frac 1 8 ||\Psi ||_{H^1}^2 ,\label{ineq:2}
\end{align}
where $C_{0}=C(\|V_*  \|_{L^4}^2+\|\Th_*  \|_{L^4}^2+ \|\nb \Th_*  \|_{L^2}+ \norm{F}_{H^{-1}}+ \norm{LV_*}_{H^{-1}} + \norm{L\Th_*}_{H^{-1}} ) ^2 $ is independent of $s$, $T$, $k$, and $\epsilon$.

By Lemma \ref{rev-profile} and Hardy's inequality, (compare \eqref{2.14}--\eqref{2.16})
\begin{align}
\big| ( (\eta_\epsilon *U \cdot\nabla)  V_*, U )  \big| &\leq  C\al ||U||_{H^1}^2 , \label{3.13}\\
\big| ( (\Psi\nb G, V )  \big| &\leq  C ||\nb U||_{L^2}  ||\nb \Psi||_{L^2} ,\\
\big| ( (\eta_\epsilon *U \cdot\nabla)  \Th_*, \Psi )  \big| &\leq  C\al ||U||_{H^1}  ||\Psi||_{H^1} .\label{3.15}
\end{align}
The estimates \eqref{ineq:1}--\eqref{3.15} imply for sufficiently small $\al>0$,
\begin{align}
\frac d {ds} ||U||_{L^2}^2 + \frac 1 4 ||U||_{L^2}^2+ ||\nabla U||_{L^2}^2&\le C_2+C_1  ||\nb \Psi||_{L^2}^2 \label{3.16}
\\
 \frac d {ds} ||\Psi||_{L^2}^2 + \frac 1 4 ||\Psi||_{L^2}^2+ ||\nabla \Psi||_{L^2}^2&\le C_2+C _1\al^2 ||U||_{H^1}  ^2 , \label{3.17}
\end{align}
for some constants $C_1,C_2$.
Multiply the second equation by $2C_1$ and add the first equation. We get for 
\[
f(s)= ||U(s)||_{L^2}^2
+ 2C_1||\Psi(s)||_{L^2}^2 = \tsum_{i=1}^k (b_i^2 + 2C_1 q_i^2)
\]
that
\EQ{
\frac d {ds} f(s) + \frac 14 f + ||\nabla U||_{L^2}^2 + C_1 ||\nabla \Psi||_{L^2}^2 \le C_3 + 2C_1^2 \al^2 ||U||_{H^1}  ^2 ,
}
where $C_3=C_2(1+2C_1) $.
Taking $\al$ sufficiently small so that $2C_1^2 \al^2 \le 1/8$, we get
\EQ{\label{ineq:kenergyevolution} 
\frac d {ds} f(s) + \frac 18 f + \frac12 ||\nabla U||_{L^2}^2 + C_1 ||\nabla \Psi||_{L^2}^2 \le C_3  .
}

The Gronwall's inequality implies
\begin{equation} \label{ineq:gronwall}
e^{s/8} f(s)
\leq f(0) +  \int_0^{\tilde T} e^{\tau/8}  C_3 \,dt
 \le  f(0) + e^{T/8} C_3 T
\end{equation}
for all $s\in [0,\tilde T]$. Since the right hand side is finite, $\tilde T$ is not a blow-up time and we conclude that $\tilde T=T$.  

By \eqref{ineq:gronwall} we can choose sufficiently large $\rho>0$ (independent of $k$) so that 
\begin{equation}\notag
f(0)\leq \rho^2 \Rightarrow f(T) \leq \rho^2.
\end{equation}
Denote by $ B_\rho^{2k}$ the closed ball of radius $\rho$ in $\R^{2k}$ centered at the origin. Let $\mathcal T: B_\rho^{2k}\to B_\rho^{2k}$ 
be the nonlinear map
that maps $(\vec b_{(k)}, \sqrt{2C_1} \vec q_{(k)})(0)$ to $(\vec b_{(k)}, \sqrt{2C_1} \vec q_{(k)})(T)$.   This map is continuous and thus has a fixed point by the Brouwer fixed-point theorem, implying that there exists some $U^{0}\in \operatorname{span}(\phi_1,\ldots,\phi_k)$ 
and $\Psi^{0}\in \operatorname{span}(\beta_1,\ldots,\beta_k)$ 
so that $(\vec b_{(k)}, \sqrt{2C_1} \vec q_{(k)})(0)=(\vec b_{(k)}, \sqrt{2C_1} \vec q_{(k)})(T)$. 
The solution $(\vec b_{(k)}, \vec q_{(k)})(s)$ with this initial data $(\vec b_{(k)},  \vec q_{(k)})(0)$ is $T$-periodic.

It remains to check that \eqref{ineq:uniformink} holds. The $L^\infty L^2$ bound now follows from \eqref{ineq:gronwall}
since $f(0) \le \rho^2$, which is independent of $k$ and $\epsilon$.  
 Integrating  \eqref{ineq:kenergyevolution} in $s \in [0,T]$ and using $f(0)=f(T)$, we get
\begin{equation} \label{eq2.33}
 \int_0^T \big(\frac12 ||\nabla U(s)||_{L^2}^2 + C_1 ||\nabla \Psi(s)||_{L^2}^2
 \big) \,ds
 \le C_3 T,
\end{equation}
which gives an upper bound for $\| U_k, \Psi_k  \|_{L^2(0,T;H^1 )}$ uniform in $k$ and $\epsilon$. 
This finishes the proof of Lemma \ref{lemma:Galerkin}.
\end{proof}

We are now ready to prove Theorem \ref{th:main}.  

\begin{proof}[Proof of Theorem \ref{th:main}] 
We will only sketch the proof since it is similar to the proof of \cite[Theorem 1.2]{BT1} once we have obtained Lemma \ref{lemma:Galerkin} on the existence of $T$-periodic approximation solutions and their uniform estimates.

We have derived from \eqref{OB1} the corresponding system \eqref{Leray} in similarity variables.
Define the profile $(V_0,\Th_0)$ by \eqref{U0def}. 
Under the assumptions of Theorem \ref{th:main}, in particular \eqref{ineq:decayingdata},
the profile $(V_0,\Th_0)$ satisfies Lemma \ref{profile}. Also define the revised profile $(V_*,\Th_*)$ as in Lemma \ref{rev-profile}. The deviation $(U,\Psi)=(V-V_*,\Th-\Th_*)$ satisfies the perturbed system \eqref{Leray1}--\eqref{Leray3}. 

Instead of \eqref{Leray1}--\eqref{Leray3}, considered its mollified version \eqref{mLeray1}--\eqref{mLeray3} with mollifier $\eta_\e$.
As $(V_*,\Th_*)$ satisfies the estimates in Lemma \ref{rev-profile}, Lemma \ref{lemma:Galerkin} gives existence of solutions $(U_{\e,k},\Psi_{\e,k})$ of  \eqref{mLeray1}--\eqref{mLeray3} limited to finite dimensional subspaces with uniform bounds. By a standard limiting argument, there exists $T$-periodic $(U_\e, \Psi_\e)$ satisfying the same bounds \eqref{ineq:uniformink}
and a subsequence of $(U_{\e,k},\Psi_{\e,k})$ (still denoted by $(U_{\e,k},\Psi_{\e,k})$) so that, as $k \to \infty$,
\begin{align*}
& U_{\e,k}\rightarrow U_\epsilon ,\, \Psi_{\e,k}\rightarrow \Psi_\epsilon  \mbox{~weakly in}~L^2(0,T;H^1(\R^3)),
\\&U_{\e,k}\rightarrow U_\epsilon , \,\Psi_{\e,k}\rightarrow \Psi_\epsilon \mbox{~strongly in}~L^2(0,T;L^2(K))  \mbox{~for all compact sets~}K\subset \R^3,
\\& U_{\e,k}(s)\rightarrow U_\epsilon(s) ,\, \Psi_{\e,k}(s)\rightarrow \Psi_\epsilon (s) \mbox{~weakly in}~L^2 (\R^3)\mbox{~for all}~s\in [0,T].
\end{align*}
The weak convergence guarantees that $(U_\e, \Psi_\e)(0)=(U_\e, \Psi_\e)(T)$.  The limit $(U_\e,\Psi_\e)$ is a periodic weak solution of the mollified perturbed Leray system \eqref{mLeray1}--\eqref{mLeray3} satisfying the uniform in $\e$ bound
\begin{equation}\label{ineq:uniforminep}
||U_\e,\Psi_\e||_{L^\infty (0,T;L^2(\R^3))} + ||U_\e,\Psi_\e||_{L^2(0,T;H^1(\R^3))}<C.
\end{equation}

At this stage we construct an associated pressure $P_\epsilon$. %
 Note that $P_\e$ is defined as a distribution whenever $U_\e$ is a weak solution (see \cite{Temam}), but we need to show that $P_\e$ is a function  in $L^{3/2}_{x,t,\loc}$ with a bound uniform in $\e$. 
Rewrite \eqref{mLeray1} as
\[
LU +\nabla P =F_\all ^\e - LV_*,
\quad F_\all^\e =- (V_*+\eta_\e * U_\e)\cdot\nabla (V_*+U_\e)+ (\Th_*+\Psi_\e) \nb G+F.
\]
Note that $ F_\all^\e \in L^\infty(0,T; H^{-1}(\R^3))$.
Taking its divergence and noting $\div  LV_*=0$, we get
\begin{equation}\label{p.eq}
 \Delta P_\epsilon =\div F_{\all}^\e
\end{equation}
in the sense of distributions.
Let 
\begin{equation} \label{tdP.def}
\tilde P_\epsilon = \div\Delta^{-1} F_{\all}^\e.
\end{equation}
It involves Riesz transforms and potentials, and also satisfies \eqref{p.eq}.
We claim that $\nb P_\e=\nb \tilde P_\e$.  To this end we use a well known fact about the forced, non-stationary Stokes system on  $\R^3\times [t_1,t_2]$: If $g\in L^\infty(t_1,t_2;H^{-1}(\R^3))$ and $v_0\in L^2(\R^3)$, then there exists a unique $ v\in  C_w([t_1,t_2];L^2(\R^3))\cap L^2(t_1,t_2;H^1(\R^3))$ and unique distribution $\nabla  p $ satisfying $v(x,t_1)=v_0(x)$ and
\[(\partial_t  v -\Delta  v +\nabla p)(x,t) = g(x,t),\qquad\div v(x,t)=0,\] for $(x,t)\in \R^3\times [t_1,t_2]$. 
For our purpose, let $v(x,t)=t^{-1/2}U_\epsilon(y,s)$,
$ p(x,t) = t^{-1}P_\epsilon(y,s)$, and $g(x,t)=(g_1+g_2)(x,t)$ where $x,y,t,s$ satisfy \eqref{similarity-variables} and
\[
g_1(x,t) = - t^{-3/2} (LV_*)(y,s), \quad g_2(x,t)= t^{-3/2} F_\all^\e(y,s).
\]
Then, $(v,p)$ solves the Stokes system on $\R^3\times [1,\lambda^2]$ with
$g_1,g_2 \in L^\infty(1,\lambda^2;L^2(\R^3))$,  and $v$ is in the energy class.
 We conclude that $\nabla p $ is unique. Since $g \in L^\infty L^2$, $\nabla p$ is given by
 \EQ{
 \nabla p = \nabla (\Delta)^{-1} \div g= \nabla (\Delta)^{-1} \div g_2,
 }
 noting that $g_1$ is divergence free. %
 Since taking Riesz transforms commutes with the above change of variables, we conclude that $\nabla P_\e=\nabla \tilde P_\e$. We may therefore replace $P_\e$ by $\tilde P_\e$,  and apply the Calderon-Zygmund theory to obtain
\begin{equation}\label{ineq:uniforminepP}
\|P_\epsilon \|_{L^{5/3}(\mathbb R^3\times [0,T])}\leq C\|U_\epsilon \|_{L^{10/3}(\mathbb R^3\times [0,T])} ^2 +C\| {V_*} \|_{L^{10/3}(\mathbb R^3\times [0,T])}  ^2,
\end{equation}
which is finite and independent of $\epsilon$ because the uniform energy bound \eqref{ineq:uniforminep} for $U_\e$ and Lemma \ref{rev-profile} with $q=10/3$ for $V_*$. 

Because of the uniform bounds \eqref{ineq:uniforminep} and \eqref{ineq:uniforminepP},
there exists $(U,\Psi,P)$ satisfying the same bounds 
and a subsequence $\e_k$, $k \in \NN$, so that as $\epsilon_k\to 0$,
\begin{align*}
& U_{\epsilon_k} \rightarrow V,\,  \Psi_{\epsilon_k} \rightarrow \Psi \mbox{~weakly in}~L^2(0,T;H^1)
\\& U_{\epsilon_k}\rightarrow V,\, \Psi_{\epsilon_k} \rightarrow \Psi  \mbox{~strongly in}~L^2(0,T;L^2(K)) ~ \forall \mbox{~compact sets $K\subset \R^3$}
\\& U_{\epsilon_k}(s)\rightarrow V(s),\, \Psi_{\epsilon_k}(s) \rightarrow \Psi (s) \mbox{~weakly in}~L^2 \mbox{~for all}~s\in [0,T],\\
&P_{\epsilon_k}\rightarrow P \mbox{~weakly in}~L^{5/3}(\R^3\times [0,T]).
\end{align*}
 Recall that  $(U_\e,\Th_\e,P_\e)$ solves \eqref{mLeray1}--\eqref{mLeray3}. The above convergence is strong enough to ensure that $(U,\Th,P)$ solves the limiting system \eqref{Leray1}--\eqref{Leray3} in the distributional sense. 
  
We then let $V=V_*+U$ and  $\Th=\Th_*+\Psi$, and define $(v,\th,p)$ by 
the similarity transform \eqref{similarity-transform}, which is then a $\la$-DSS solution of \eqref{OB1}. The inequality \eqref{th:main-est} on the convergence to initial data follows from 
\EQ{ \label{conv.v0}
\|  v(t)-e^{t\Delta}v_0 \|_{L^2_x}  = t^{\frac14} \|  (V_*+U)-V_0  \|_{L^2_y},
}
 \eqref{lem22a} and \eqref{ineq:uniforminep}, and similarly for $\th(t)$.

It remains to check that the triplet $(v,\th,p)$ satisfies the local energy inequalities \eqref{localEnergyIneq}. This follows as the approximating solutions $(v_\e,\th_\e, p_\e)$, defined similarly by \eqref{similarity-transform}, all satisfy the \emph{local energy equality}. 
\end{proof} 

\begin{proof}[Proof of Theorem \ref{th:selfsimilardata}] 
We will only sketch the proof since it is similar to the proof of \cite[Theorem 1.3]{BT1}.
For self similar data, we study stationary solutions of the corresponding system \eqref{Leray} of \eqref{OB1} in similarity variables.
Define the profile $(V_0,\Th_0)$ by \eqref{U0def}, and 
the revised profile $(V_*,\Th_*)$ as in Lemma \ref{rev-profile}. They are now independent of $s$. The deviation $(U,\Psi)=(V-V_*,\Th-\Th_*)$ is a stationary solution of the perturbed system \eqref{Leray1}--\eqref{Leray3}. For $k \in \NN$, its Galerkin approximation \eqref{Vk-def}  is a stationary solution
of the ODE system \eqref{eq:ODE}. Because stationary weak solutions are automatically smooth and satisfy the local energy equalities, we
do not need mollification and can ignore $\eta_\e*$ in \eqref{eq:ODE} and the definitions of the coefficients below \eqref{eq:ODE}. 

Let $m = \sqrt{2C_1}$ where $C_1$ is the constant in \eqref{3.16}--\eqref{3.17}.
Denote $x=(x',x'')\in \R^{2k}$ with $x',x''\in \R^k$. %
Define a map $P(x) : \R^{2k} \to \R^{2k}$ by
\EQS{\label{Pxdef}
P(x)_{j} = & \sum_{i=1}^k \bke{A_{ij}b_{i} + B_{ij} q_i}+\sum_{i,l=1}^k C_{ilj} b_{i}b_{l} +D_j,\\
P(x)_{k+j}  = & m\bigg\{\sum_{i=1}^k \bke{A_{ij} ^*b_{i} + B_{ij}^* q_i}+\sum_{i,l=1}^k C_{ilj}^* b_{i}q_{l} +D_j^*\bigg\},
}
for $j\in \{1,\ldots,k\}$, where
\EQ{\label{bq.def}
b=x', \quad q=\frac 1m x'',
}
and the coefficients are as defined below \eqref{eq:ODE}. Let $U = \sum_{i=1}^k b_{i}\phi_i$ and $\Psi = \sum_{i=1}^k q_{i}\beta_i$. Using the cancellation \eqref{cancellation},
we have
\EQN{
P(x) \cdot x&=-
 \frac 1 4 ||U||_{L^2}^2- ||\nabla U||_{L^2}^2- ( (U \cdot\nabla)  V_* + \Psi \nb G, U ) + \langle \cR_b, U\rangle\\
&\quad + m^2\bigg\{ -\frac 1 4 ||\Psi||_{L^2}^2- ||\nabla \Psi||_{L^2}^2- ( (U \cdot\nabla)  \Th_* , \Psi ) + \langle \mathcal{R}_q, \Psi\rangle \bigg\}.
}
For $\al>0$ sufficiently small,  
the estimates \eqref{ineq:2}--\eqref{3.15} imply
\begin{align}
P(x) \cdot x&\le - \frac 1 8 ||U||_{L^2}^2- \frac 12 ||\nabla U||_{L^2}^2
 +C_2+C_1  ||\nb \Psi||_{L^2}^2 \notag
\\
&\quad + m^2\bigg\{ -\frac 1 8 ||\Psi||_{L^2}^2-\frac 12 ||\nabla \Psi||_{L^2}^2 
+ C_2+C _1\al^2 ||U||_{H^1}  ^2 \bigg\} \notag \\
&\le  -  \frac 1 8 \bigg( ||U||_{L^2}^2 +||\nabla U||_{L^2}^2 +  m^2 ||\Psi||_{L^2}^2 +||\nabla \Psi||_{L^2}^2 \bigg) + C_2(1+m^2).\label{3.30}
\end{align}
Note that
\[
||U||_{L^2}^2 + m^2 ||\Psi||_{L^2}^2 = |b|^2 + m^2 |q|^2 = |x|^2 .
\]
Hence $P(x) \cdot x \le 0$ if
\[
|x|= \rho := [ 8C_2(1+m^2)]^{\frac12}.
\]
By a variant of Brouwer’s fixed point theorem \cite[Lemma 2.6]{nslec}, there is one $x$ with $|x| < \rho$ such that $P(x) = 0$. 
For this $x$, \eqref{bq.def} gives a stationary solution of \eqref{eq:ODE}, with a priori bound
\EQ{
 ||U||_{L^2}^2 +||\nabla U||_{L^2}^2 +  m^2 ||\Psi||_{L^2}^2 +||\nabla \Psi||_{L^2}^2 \le 8 C_2(1+m^2)
}
independent of $k$, by inequality \eqref{3.30} and $P(x)=0$.
This bound is sufficient to find a subsequence with a weak limit in $H^1(\R^3)$ and a strong limit in $L^2(K)$ for any compact set $K$ in $\R^3$, that is, there exists a stationary solution $(U,\Psi)$ to the perturbed system \eqref{Leray1}--\eqref{Leray3}. The sum $(V,\Th)=(V_*+U,\,\Th_*+\Psi)$ is then 
 a stationary weak solution of \eqref{Leray}. A pressure $P$ can be defined by \eqref{tdP.def} using Riesz transforms. The triplet $(v,\th,p)$ given by the similarity transform \eqref{similarity-transform} is then a self-similar solution of \eqref{OB1} satisfying the convergence to initial data inequality \eqref{th:main-est} using \eqref{conv.v0}. This completes the proof of Theorem \ref{th:selfsimilardata}.
 \end{proof}

\section{A forced MHD system}\label{sec4}

In this section we propose a system which has the interesting feature that both trilinear terms and large quadratic terms appear in the energy estimates. Because
it is unclear whether we can find an explicit a priori bound for the system, and whether we may prove an implicit a priori bound for it by a contradiction argument, we do not know whether we can construct self-similar or discretely self-similar solutions for the system.

Consider the following forced MHD system
\EQS{\label{MHDG}
\partial_t v-\Delta v +v\cdot\nabla v - b \cdot \nb b +\nabla p &= f(b),\\
\partial_t b-\Delta b +v\cdot\nabla b - b\cdot \nb v& = 0,\\
\nabla \cdot v=\nabla \cdot b&=0,
}
for the unknown fluid velocity $v(x,t)$, the magnetic field $b(x,t)$ and the pressure $p = p(x,t)$, 
 defined for $x \in \R^3$ and $ t>0$. The term $f(b)$ is a scaling-critical vector field depending linearly in $b$, e.g.,
\EQ{
f(b) = b \times \nb \frac M{|x|},
}
or
\EQ{
f(b) = \frac M{|x|^2}\, b ,
}
for some constant $M$.
If $b$ is transformed as \eqref{similarity-transform}, $b(x,t) = t^{-1/2} B(y,s)$, then $f(b)$ satisfies
\EQ{
f(b(x,t)) = \frac 1{\sqrt t^{3}} \, f(B(y,s)).
}
The system \eqref{MHDG}
is coupled with initial conditions
\EQ{\label{MHDG2}
v(x,0)=v_0(x), \quad b(x,0)=b_0(x).
}

Formal
energy estimates give
\begin{align}
\frac d{2dt} \int |v|^2 +  \int |\nb v|^2  - \int b \cdot \nb b \cdot v &\le \int f(b)\cdot v ,\label{eq4.5}
\\
\frac d{2dt} \int |b|^2 +  \int |\nb b|^2  - \int b \cdot \nb v \cdot b &\le 0.\label{eq4.6}
\end{align}
To remove the nonlinear effect of trilinear terms, we add \eqref{eq4.5} and \eqref{eq4.6} with constant factor 1 for both equations. But then we cannot control the quadratic term $\int f(b)\cdot v $ on the right side for large $M$, as we went from \eqref{3.16}--\eqref{3.17} to \eqref{ineq:kenergyevolution}.

The same issue remains in similarity variables, as we only add $\frac 14 \int |v|^2$ and $\frac 14 \int |b|^2$ to the left sides of \eqref{eq4.5} and \eqref{eq4.6}; compare \eqref{Leray4} and \eqref{Leray5}.

As we cannot control the trilinear terms and the large quadratic term $\int f(b)\cdot v $ at the same time, it is unclear whether we can find an explicit a priori bound for the system. On the other hand, it is also unclear whether we may prove an implicit a priori bound by a contradiction argument. 
This makes this system interesting: Is this a candidate of a system that we may prove existence of self-similar solutions, but not existence of discretely self-similar solutions?

\section*{Acknowledgments}
I warmly thank Zachary Bradshaw and Chen-Chih Lai for helpful comments. The research of TT was partially supported by Natural Sciences and Engineering Research Council of Canada (NSERC) under grant RGPIN-2023-04534.

\bibliographystyle{abbrv}
\bibliography{/Users/ttsai/Nextcloud/tex/bib/fluid2024}

\end{document}